\documentclass[12]{amsart}
\usepackage{amsmath,amssymb,amsthm,color,enumerate,comment,centernot,enumitem,url,cite}
\usepackage{graphicx,relsize,bm}
\usepackage{mathtools}
\usepackage{array}

\makeatletter
\newcommand{\tpmod}[1]{{\@displayfalse\pmod{#1}}}
\makeatother

\newtheorem{thm}{Theorem}[section]
\newtheorem{lemma}[thm]{Lemma}

\newtheorem{cor}[thm]{Corollary}

\theoremstyle{remark}

\theoremstyle{definition}

\theoremstyle{THM}

\newcommand{\G}{{\mathcal G}}

\newcommand{\FF}{{\mathcal F}}

\newcommand{\rad}{{\mbox{{\rm{rad}}}}}
\newcommand{\Mod}[1]{\ (\mathrm{mod}\enspace #1)}
\newcommand{\mmod}[1]{\ \mathrm{mod}\enspace #1}
\newcommand{\Z}{{\mathbb Z}}
\newcommand{\Q}{{\mathbb Q}}

\newcommand{\F}{{\mathbb F}}
\newcommand{\abs}[1]{\left|{#1}\right|}

\newcommand{\ds}{\displaystyle}
\newcommand{\D}{{\mathcal D}}

\makeatletter
\@namedef{subjclassname@2020}{%
  \textup{2020} Mathematics Subject Classification}
\makeatother

\def\red#1 {\textcolor{red}{#1 }}
\def\blue#1 {\textcolor{blue}{#1 }}

\numberwithin{equation}{section}

\begin{document}

\title[Generalized Wieferich primes and monogenic polynomials]{Generalized Wieferich primes\\ and monogenic polynomials}

\author{Lenny Jones}
\address{Professor Emeritus, Department of Mathematics, Shippensburg University, Shippensburg, Pennsylvania 17257, USA}
\email[Lenny~Jones]{doctorlennyjones@gmail.com\\ ORCID: 0000-0001-7661-4226}

\date{\today}

\begin{abstract}
Let $b, p\in \Z$ with $b\ge 2$ and $p\ge 3$ a prime. If $b^{p-1}\equiv 1 \pmod{p^2}$, then $p$ is called a {\em generalized Wieferich prime base $b$}, or more succinctly, a {\em base-$b$ Wieferich prime}. When $b=2$, $p$ is also known simply as a Wieferich prime.  
We say that a monic polynomial $f(x)\in \Z[x]$ is {\em monogenic} if $f(x)$ is irreducible over ${\mathbb Q}$ and $\{1,\theta,\theta^2,\ldots,\theta^{\deg(f)-1}\}$ is a basis for the ring of integers of ${\mathbb Q}(\theta)$, where $f(\theta)=0$. 

Recently, necessary and sufficient conditions for the monogenicity of the trinomials $x^{2n}+bx^n+b$ were given that included   base-$b$ Wieferich prime congruences, and also congruences involving a certain Lucas sequence. In this article, we prove a similar result for a different class of trinomials that does not rely on any congruence condition involving a Lucas sequence. Furthermore, we extend this result to a related class of $N$-nomials, for any $N\ge 4$.
\end{abstract}

\subjclass[2020]{Primary  11R09, 11R04; Secondary 11R21, 11Y40}
\keywords{generalized Wieferich prime, monogenic polynomial}

\maketitle
\section{Introduction}\label{Section:Intro}
For an integer $b\ge 2$ and a prime $p\ge 2$, we say that $p$ is a {\em generalized Wieferich prime base $b$}, or more succinctly, a {\em base-$b$ Wieferich prime} if $b^{p-1}\equiv 1 \pmod{p^2}$  \cite{Conrad}. When $b=2$, the prime $p$ is also known simply as a {\em Wieferich prime}. For any particular $b$, it is still not known as to whether there exist infinitely many, only finitely many, or no  base-$b$ Wieferich primes \cite{Conrad,OEIS}. Observe that the case $p=2$ is somewhat trivial since it follows immediately from the definition that $2$ is a base-$b$ Wieferich prime if and only if $b\equiv 1 \pmod{4}$. 

We say that a monic polynomial $f(x)\in \Z[x]$ is {\em monogenic} if $f(x)$ is irreducible over ${\mathbb Q}$ and $\{1,\theta,\theta^2,\ldots,\theta^{\deg(f)-1}\}$ is a basis for the ring of integers of ${\mathbb Q}(\theta)$, where $f(\theta)=0$. It is well known \cite{Cohen} that 
  \begin{equation} \label{Eq:Dis-Dis}
\Delta(f)=\left[\Z_K:\Z[\theta]\right]^2\Delta(K),
\end{equation} where $\Delta(f)$ and $\Delta(K)$ denote the discriminants over $\Q$, respectively, of $f(x)$ and the number field $K$. We refer to $\left[\Z_K:\Z[\theta]\right]$ as the {\em index} of $f(x)$, and we denote it index($f$). 
Thus, from \eqref{Eq:Dis-Dis}, $f(x)$ is monogenic if and only if the index($f$)=1.

With $p$ an odd prime, the author has recently shown \cite{JonesWie} that the trinomial $x^{2p}+2x^p+2$ is monogenic if and only if $p$ is not a Wieferich prime. The following theorem, which generalizes \cite{JonesWie}, is a direct consequence of the more recent work presented in \cite{FHJ}, and provides the primary motivation for the results in this article. 
\begin{thm}\label{Thm:FHJ}
Let $k\ge 3$ be an integer, and define
\[\G_{k,b}(x):=x^{2k}+bx^k+b.\] Let $b\ge 2$ be an integer such that $b$ and $b-4$ are squarefree, and $b\not \equiv 1 \pmod{4}$ if $k\equiv 0 \pmod{2}$.  Then $\G_{k,b}(x)$ is monogenic if and only if
\begin{equation*}\label{condition delta=1}
     (2V_p+b^p+b)^2-D(b^{p-1}-1)^2 \not \equiv 0 \pmod{p^3},
     \end{equation*} for every odd prime divisor $p$ of $k$ with $\delta=1$, and 
     \begin{equation*}\label{condition delta=-1} 
    V_p\not \equiv -b \pmod{p^2} \quad \mbox{ or } \quad b^{p-1}\not \equiv 1 \pmod{p^2}, 
    \end{equation*} for every odd prime divisor $p$ of $k$ with $\delta=-1$,
    where $D=b^2-4b$, $\delta$ is the Legendre symbol $\left(\frac{D}{p}\right)$ and $(V_i)_{i\ge -1}$ is the sequence defined by  
  \begin{gather*}
 V_{-1}:=-1, \quad V_0=2,\quad V_1=-b \quad \mbox{and}\\
  V_{i}=-bV_{i-1}-bV_{i-2} \quad \mbox{for $i\ge 2$.}
\end{gather*}  
   \end{thm}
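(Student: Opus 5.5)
The plan is to apply Dedekind's criterion, prime by prime, to every prime $q$ dividing $\Delta(\G_{k,b})$, and to show that it holds automatically except at the odd primes $p\mid k$ named in the statement. Irreducibility comes first. Since $b$ is squarefree, $\G_{k,b}(x)$ is $b$-Eisenstein at any prime $q\mid b$, so it is irreducible and $q\nmid \text{index}$ for all $q\mid b$.

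Next we compute the discriminant. By the standard formula for trinomials $x^{2k}+Ax^k+B$,
\[
\Delta(\G_{k,b}) = (-1)^{k(2k-1)} k^{2k} b^{k-1}\left(b^2-4b\right)^k = \pm k^{2k} b^{2k-1}(b-4)^k .
\]
So the primes to examine are those dividing $b$ (already done), those dividing $b-4$, and those dividing $k$.

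For $q\mid b-4$ with $q\nmid k$, note that $b\equiv 4 \pmod q$. Then
\[
\G_{k,b}\equiv (x^k+2)^2 \pmod q \quad (q \text{ odd}).
\]
We factor $x^k+2$ into irreducibles mod $q$; it is separable because $q\nmid 2k$. Dedekind's criterion then reduces to checking whether each irreducible factor $\pi$ divides the remainder $F=(\G_{k,b}-(x^k+2)^2)/q = ((b-4)/q)(x^k+1)$. Since $b-4$ is squarefree, $(b-4)/q\not\equiv 0 \pmod q$. Also $x^k+1$ and $x^k+2$ are coprime, so no factor divides $F$, and $q\nmid$ index.

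The case $q=2$ needs separate treatment. If $2\mid b$, it is covered by the Eisenstein step. If $b$ is odd, then $2\nmid b-4$, and $2$ enters only through $k$. This is where the hypothesis $b\not\equiv 1\pmod 4$ for $k$ even is used: for $k=2^e m$ one has $\G_{k,b}\equiv (x^{2m}+x^m+1)^{2^e}\pmod 2$, and a Dedekind computation of the remainder shows that it is a unit modulo each factor exactly when $b\equiv 3\pmod 4$.

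The heart of the proof is an odd prime $p\mid k$ with $p\nmid b$. Write $k=p^e m$ with $p\nmid m$. Modulo $p$,
\[
\G_{k,b}\equiv \left(x^{2m}+bx^m+b\right)^{p^e},
\]
using $b^{p^e}\equiv b$ in the sense that $b^{p}\equiv b$. The polynomial $g(x)=x^{2m}+bx^m+b$ is separable mod $p$ unless $p\mid b(b-4)m$; the sub-case $p\mid b-4$ is handled similarly. The Dedekind remainder is
\[
F(x)=\frac{\G_{k,b}(x)-g(x)^{p^e}}{p}.
\]
Whether $\gcd(\bar g,\bar F)\ne 1$ reduces, via the roots $\alpha$ of $y^2+by+b$ (the images $y=x^m$), to asking whether $p^2$ divides an expression of the form $\alpha^{p}$ against $\alpha$ evaluated through the Frobenius. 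The companion sequence of $y^2+by+b$ is exactly $V_i=\alpha^i+\bar\alpha^i$, with $V_1=-b$.

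When $\delta=-1$, the roots lie in $\F_{p^2}$ and Frobenius swaps them. The common-factor condition then becomes simultaneous vanishing mod $p^2$ of the trace-type quantity $V_p+b$ and of the norm-type quantity $b^{p-1}-1$, since $\alpha\bar\alpha=b$.

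When $\delta=1$, the roots lie in $\Z_p$. The condition becomes that $(\alpha^p-\alpha)(\bar\alpha^p-\bar\alpha)$, suitably normalized, vanishes to high order. Expanding this symmetric function in terms of $V_p$, $b^p$, $b$ and $D$ gives the norm expression
\[
(2V_p+b^p+b)^2-D(b^{p-1}-1)^2
\]
modulo $p^3$. The extra power of $p$ arises from dividing out the factor contributed by $\sqrt D$.

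The main obstacle I expect is this last translation. One must carefully reduce the Dedekind gcd condition, which involves polynomials in $x$ of degree $2m$, to a statement about the two roots of the quadratic in $y=x^m$. This requires controlling $(\alpha+p t)^{p}$ expansions to get the precise powers $p^2$ and $p^3$ for the two values of $\delta$. I would first settle $m=1$, i.e. $k=p$, and then argue that lifting through $x\mapsto x^m$ with $p\nmid m$ does not change the valuations.
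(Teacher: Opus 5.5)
The paper does not prove this theorem; it quotes it as a consequence of \cite{FHJ}. Your direct Dedekind route is the natural self-contained argument: it amounts to condition (iv) of Theorem~\ref{Thm:JKS2} with $N=2k$, $M=k$, $A=B=b$. Your treatment of primes $q\mid b$, of $q\mid b-4$ with $q\nmid k$, and of $q=2$ is sound. The core step, however, has gaps.

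\textbf{The exponent $p^e$.} With $k=p^em$ and $G(y)=y^2+by+b$, the remainder is $F(x)=H(x^m)$, where $H(y)=\bigl(G(y)^{p^e}-G(y^{p^e})\bigr)/p$. Hence $\gcd(\overline F,\overline g)\neq 1$ iff $H(\alpha)\equiv 0\pmod p$ for some root $\alpha$ of $G$ in the unramified ring $\mathcal{O}=\Z_p[\alpha]$, i.e.\ iff $G(\alpha^{p^e})\equiv 0\pmod{p^2}$. Passing from $x$ to $y=x^m$ is immediate, since common roots correspond.

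Your plan to ``settle $m=1$, i.e.\ $k=p$'' silently sets $e=1$. When $\delta=-1$ and $e$ is even, $\alpha^{p^e}\equiv\alpha$, not $\alpha'$, modulo $p$, so the swap argument does not apply as written. Since the stated conditions involve only $p$, you must prove independence of $e$. Let $\sigma$ be the Frobenius automorphism of $\mathcal{O}$ (trivial if $\delta=1$) and write $\alpha^p=\sigma(\alpha)(1+pu)$. Using $(1+pw)^p\equiv 1\pmod{p^2}$, induction gives $\alpha^{p^e}\equiv\sigma^e(\alpha)\bigl(1+p\,\sigma^{e-1}(u)\bigr)\pmod{p^2}$. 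The other factor of $G(\alpha^{p^e})$ is a unit because $p\nmid D$. So the obstruction holds iff $p\mid u$, i.e.\ iff $\alpha^p\equiv\sigma(\alpha)\pmod{p^2}$.

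\textbf{The $\delta=1$ translation.} Expanding $(\alpha^p-\alpha)(\alpha'^p-\alpha')$ gives $b^p+b-bV_{p-1}$, not the displayed expression. Also, $\sqrt D$ is a $p$-adic unit here, so it cannot be the source of any power of $p$. The identity you need is
\[
(2V_p+b^p+b)^2-D(b^{p-1}-1)^2=\frac{4}{b}\,G(\alpha^p)\,G(\alpha'^p),
\]
obtained by expanding the right side with $\alpha\alpha'=b$ and $V_{2p}=V_p^2-2b^p$. Now factor
\[
G(\alpha^p)G(\alpha'^p)=(\alpha^p-\alpha)(\alpha'^p-\alpha')\cdot(\alpha^p-\alpha')(\alpha'^p-\alpha).
\]
For $\delta=1$ the last two factors are units, and each of the first two is divisible by $p$ since $\alpha,\alpha'\in\Z_p$. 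So the product is always divisible by $p^2$, and by $p^3$ exactly when $\alpha^p\equiv\alpha$ or $\alpha'^p\equiv\alpha'\pmod{p^2}$. That is the real origin of the exponent $3$.

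\textbf{The case $\delta=-1$.} Your trace/norm formulation is right, but you also need the converse. It holds because $\alpha^p$ is a root of $y^2-V_py+b^p$, which is $\equiv G(y)\pmod{p^2}$ once $V_p\equiv -b$ and $b^p\equiv b\pmod{p^2}$.

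\textbf{The case $p\mid\gcd(k,b-4)$.} Saying it is ``handled similarly'' hides a needed check, since the theorem imposes no condition when $\delta=0$. Write $b=4+pt$ with $p\nmid t$. The remainder reduces modulo $x^m+2$ to the constant $-\bigl(4^{p^e}-b\,2^{p^e}+b\bigr)/p\equiv t\not\equiv 0\pmod p$. So $p$ never divides the index there.
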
 
   We point out that both \cite{JonesWie} and Theorem \ref{Thm:FHJ} deal specifically with  
   trinomials. With  Theorem \ref{Thm:FHJ} as a motivation, there are two main goals of this article. The first goal is to determine a class of trinomials different from the trinomials in Theorem \ref{Thm:FHJ}, such that the conditions for monogenicity do not involve checking some congruence conditions in a Lucas sequence, while maintaining the generalized Wieferich prime conditions. The second goal is to generalize, in a straightforward way, this new class of trinomials to $N$-nomials, where $N\ge 4$. To accomplish these goals, we introduce the following framework. 

Throughout this article, for $n,m,b,k\in \Z$ with $n\ge 2$, $\abs{m}\ge 2$, $b\ge 2$ and $k\ge 1$, we assume that 
\begin{equation}\label{Defs1}
\begin{gathered}
 \gcd(b,m)=1, \quad b \ \mbox{is squarefree} \quad \mbox{and} \quad m\equiv 0 \Mod{\rad(k)},  
 \end{gathered}
\end{equation} 
where $\rad(1)=1$ and $\rad(k)$ is the product of the distinct prime divisors of the integer $k$ when $k\ge 2$. Furthermore, we suppose that 
\begin{equation}\label{Defs2}
  \begin{gathered}
   f_{n,b,m}(x):=x^n+b\sum_{j=1}^{n}(m^2x)^{n-j}, \quad 
  \FF_{n,b,m,k}(x):=f_{n,b,m}(x^k), \quad \mbox{and}\\ 
  d:=\frac{\abs{\D}}{\ds \prod_{\substack{p \ {\rm prime} \\ p\mid bm}} p^{\nu_p(\D)}} \quad \mbox{is squarefree},\\
    \mbox{where} \quad \D:=\frac{n^n(1-bm^{2n})^{n+1}+bm^{2n}(n+1)^{n+1}}{(1+bm^{2n}n)^2}\in \Z\\
    \mbox{and} \ \nu_p(z) \ \mbox{is the $p$-adic valuation of the integer $z$}. 
        \end{gathered}
    \end{equation}
To realize our goals using the above framework, we investigate exactly when the monogenicity of $\FF_{n,b,m,k}(x)$ is determined completely by whether certain prime divisors of $\Delta(\FF_{n,b,m,k})$ are, or are not, base-$b$ Wieferich primes. This analysis is divided into three parts: the case $n=2$, the case $n\ge 3$ with $k=1$, and the case $n\ge 3$ with $k\ge 2$ (respectively, Theorem \ref{Thm:Main1}, Theorem \ref{Thm:Main2} and Theorem \ref{Thm:Main3}).  
The case $n=2$ yields only trinomials, and so this case is more analogous to Theorem \ref{Thm:FHJ} in that regard. The approach used for $n\ge 3$ differs from the case $n=2$, and requires slightly more machinery. More precisely, the case $n\ge 3$ is divided into the two subcases $k=1$ and $k\ge 2$, where the subcase $k=1$  is addressed using a theorem of Jakhar, Kalwaniya and Yadev \cite{JKY1}, while the subcase $k\ge 2$ is handled using a power-compositional theorem due to Kaur, Kumar and Remete \cite{KKR}. 
 Our main results are as follows.
\begin{thm}\label{Thm:Main1}
 Assuming \eqref{Defs1} and \eqref{Defs2}, let $n=2$. 
Then the trinomial
\[\FF_{2,b,m,k}(x)=x^{2k}+bm^2x^k+b\] 
is monogenic if and only if no odd prime divisor of $k$ is a base-$b$ Wieferich prime and if, additionally, $m\equiv 0 \pmod{2}$, then 
$b\equiv 1 \pmod{4}$, or equivalently, 2 is a base-$b$ Wieferich prime.     
\end{thm}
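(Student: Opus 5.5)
The plan is to reduce the monogenicity of $\FF_{2,b,m,k}(x)=x^{2k}+Ax^k+B$, with $A=bm^2$ and $B=b$, to a prime-by-prime check on the primes dividing its discriminant. I will use Dedekind's index criterion, or equivalently the Jakhar--Khanduja--Sangwan criterion for trinomials $x^{N}+Ax^{M}+B$, specialised to $N=2k$, $M=k$.

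\emph{Irreducibility.} Since $b\ge 2$ is squarefree, any prime $r\mid b$ satisfies $r\mid bm^2$, $r\mid b$ and $r^2\nmid b$. So $\FF_{2,b,m,k}(x)$ is $r$-Eisenstein, hence irreducible, and $r\nmid \operatorname{index}(\FF_{2,b,m,k})$.

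\emph{The discriminant.} The standard formula gives
\[\Delta(\FF_{2,b,m,k})=\pm k^{2k}b^{k-1}(A^2-4B)^k=\pm k^{2k}b^{2k-1}(bm^4-4)^k.\]
Next I identify $\D$ for $n=2$. With $t=bm^4$, the numerator of $\D$ is $4(1-t)^3+27t=4+15t+12t^2-4t^3=(1+2t)^2(4-t)$, so $\D=4-bm^4$. The hypothesis that $d$ is squarefree then says exactly that every prime $q\nmid bm$ divides $bm^4-4$ at most once. Moreover, $\gcd(b,m)=1$ and $\operatorname{rad}(k)\mid m$ imply that an odd prime dividing $k$ or $m$ cannot divide $bm^4-4$. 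Hence the primes to check fall into three types:
\begin{enumerate}[label=(\roman*)]
\item primes $r\mid b$, already settled by Eisenstein;
\item odd primes $q\nmid bk$ with $q\mid bm^4-4$;
\item primes $q\mid m$ (which include all primes dividing $k$), with $q=2$ treated separately.
\end{enumerate}

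\emph{Type (ii).} Here $\FF\equiv (x^k+bm^2/2)^2 \pmod q$ with $x^k+bm^2/2$ separable mod $q$. The trinomial criterion reduces the condition to $q^2\nmid A^2-4B$, which is exactly the squarefreeness of $d$. So these primes never divide the index.

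\emph{Type (iii), odd $q$.} Write $k=q^e k'$ with $q\nmid k'$. Since $q\mid A$ and $q\nmid B$, we get $\FF\equiv x^{2k}+b\equiv (x^{2k'}+b)^{q^e}\pmod q$. The relevant case of the criterion (equivalently, the Dedekind remainder computation with $g=$ the radical of $x^{2k'}+b$) says $q\mid$ index exactly when $q^2\mid b^{q}-b$. Here the term $bm^2x^k$ contributes nothing because $q^2\mid bm^2$. Thus $q$ divides the index if and only if $b^{q-1}\equiv 1\pmod{q^2}$, that is, $q$ is a base-$b$ Wieferich prime. If $q\mid m$ but $q\nmid k$ ($e=0$), then $x^{2k}+b$ is separable mod $q$ and $q\nmid\Delta$ apart from the factor coming from $m$, so $q$ is harmless.

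\emph{Type (iii), $q=2$.} When $m$ is even, $b$ is odd and $4\mid A$. Then $\FF\equiv (x^{k'}+1)^{2^{e+1}}\pmod 2$, where $k=2^e k'$ with $k'$ odd (possibly $e=0$). I will run Dedekind's criterion directly: take $g(x)=x^{k'}+1$ and $h(x)=(x^{k'}+1)^{2^{e+1}-1}$, and form $F_1=(\FF-gh)/2$. Since $4\mid bm^2$, the middle term drops out mod 2. This leaves $F_1\equiv (b-1)/2 + (\text{terms from the binomial expansion})\pmod 2$, and the binomial terms are divisible by $x^{k'}+1$ mod 2. Therefore $\gcd(\overline{F_1},\bar g,\bar h)=1$ if and only if $(b-1)/2$ is odd, i.e.\ if and only if $b\equiv 3\pmod 4$. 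This looks like the opposite sign to the theorem, so the constant bookkeeping has to be done carefully. I expect this to be the main obstacle: the $\pm$ in $(x^{k'}+1)^{2^{e+1}}$ versus $x^{2k}+b$, and the exact constant left over in $F_1$, must be tracked precisely to land on the stated condition $b\equiv 1\pmod 4$, equivalently that $2$ is a base-$b$ Wieferich prime. I would first verify the small case $k=1$, i.e.\ $x^2+bm^2x+b$ with $m$ even, by a direct discriminant/ring-of-integers computation, to fix the correct normalization before doing the general computation.

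\emph{Conclusion.} Combining the three types gives the stated equivalence.
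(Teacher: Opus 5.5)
\textbf{There is a genuine gap in the case $q=2$.} It is not a matter of signs or normalization. It is a concrete miscalculation. Your claim that the binomial terms in $F_1$ vanish modulo $2$ and $x^{k'}+1$ is false. Write
\[
\FF_{2,b,m,k}(x)-(x^{k'}+1)^{2^{e+1}}=bm^2x^{k}+(b-1)-\sum_{j=1}^{2^{e+1}-1}\binom{2^{e+1}}{j}x^{k'(2^{e+1}-j)} .
\]
For $1\le j\le 2^{e+1}-1$ we have $\nu_2\binom{2^{e+1}}{j}=e+1-\nu_2(j)$. So after dividing by $2$, exactly one binomial coefficient survives modulo $2$, namely the one with $j=2^e$. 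Together with $4\mid bm^2$, this gives
\[
F_1\equiv \tfrac{b-1}{2}+x^{k}\pmod 2 .
\]
Modulo $2$ and $x^{k'}+1$ we have $x^{k'}\equiv 1$, hence $x^k=(x^{k'})^{2^e}\equiv 1$. So $\overline{F_1}$ takes the constant value $\frac{b-1}{2}+1$ at every root of $\overline{g}$. Since $\overline{h}$ is a power of $\overline{g}$, we get $\gcd(\overline{F_1},\overline{g},\overline{h})=1$ if and only if $\frac{b-1}{2}$ is even, that is, $b\equiv 1\pmod 4$. This is exactly the theorem's condition, not its opposite.

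Your proposed $k=1$ check would have exposed the error. There $F_1=\bigl(\tfrac{bm^2}{2}-1\bigr)x+\tfrac{b-1}{2}$. The $-x$ is precisely the dropped binomial term, and it is $\equiv 1$ at $x=1$ modulo $2$.

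As written, the proposal reaches the wrong conclusion at $q=2$ and defers the fix. It therefore does not establish the equivalence for the prime $2$, which is the entire second half of the statement. The rest of your argument is correct and essentially matches the paper's proof, which applies the Jakhar--Khanduja--Sangwan trinomial criterion prime by prime:
\begin{itemize}
\item Eisenstein (the paper uses condition (i)) for $p\mid b$;
\item condition (v) for $p\mid bm^4-4$ with $p\nmid bm$;
\item condition (ii) for $p\mid m$.
\end{itemize}
In the paper's $p=2$ case, the same $+1$ that you dropped appears through $b_1=\bigl(b+(-b)^{2^j}\bigr)/2$. Because $(-b)^{2^j}=b^{2^j}\equiv 1\pmod 4$, one has $2b_1\equiv b+1\pmod 4$, and $b_1$ is odd exactly when $b\equiv 1\pmod 4$.
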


The next theorem extends Theorem \ref{Thm:Main1} to $(n+1)$-nomials when $n\ge 3$ with $k=1$. 
\begin{thm}\label{Thm:Main2}
 Assuming \eqref{Defs1} and \eqref{Defs2}, let $n\ge 3$. Then $\FF_{n,b,m,1}(x)$ is monogenic if and only if no odd prime divisor of
  $\gcd(n,m)$ is a base-$b$ Wieferich prime and if, additionally, $\gcd(n,m)\equiv 0 \pmod{2}$, then 
$b\equiv 1 \pmod{4}$, or equivalently, 2 is a base-$b$ Wieferich prime.  
\end{thm}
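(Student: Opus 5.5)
The plan is to apply Dedekind's index criterion prime by prime, after three preliminary observations. First, $\FF_{n,b,m,1}(x)=f_{n,b,m}(x)$ is irreducible: its constant term is $b$ and every other non-leading coefficient is $b m^{2(n-j)}$. Since $b\ge 2$ is squarefree, any prime $q\mid b$ makes $f_{n,b,m}$ $q$-Eisenstein. Second, multiplying by $m^2x-1$ telescopes the geometric sum:
\[(m^2x-1)f_{n,b,m}(x)=m^2x^{n+1}+(bm^{2n}-1)x^n-b.\]
I would use this to compute $\Delta(f_{n,b,m})$ through the trinomial discriminant formula and the resultant of $f_{n,b,m}$ with $m^2x-1$. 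The expected outcome is that $\Delta(f_{n,b,m})$ equals, up to sign, $\D$ times a product of powers of primes dividing $bm$. This is where the hypothesis $\D\in\Z$ and the definition of $d$ enter. Equivalently, I could quote the theorem of Jakhar, Kalwaniya and Yadav \cite{JKY1} for this shape of polynomial, which packages both the discriminant and the Dedekind conditions. Third, the only primes that can divide the index are primes dividing $\Delta(f_{n,b,m})$, so it suffices to treat three types of prime.

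The three types are handled as follows.
\begin{enumerate}[label=(\roman*)]
\item Primes $p\nmid bm$. Here $\nu_p(\Delta(f_{n,b,m}))=\nu_p(d)\le 1$ because $d$ is squarefree. By \eqref{Eq:Dis-Dis}, $p$ cannot divide the index.
\item Primes $q\mid b$. Then $f_{n,b,m}$ is $q$-Eisenstein, so $q$ does not divide the index.
\item Primes $p\mid m$. Then $p\nmid b$ since $\gcd(b,m)=1$, and every middle coefficient is divisible by $m^2$, hence by $p^2$. So $f_{n,b,m}(x)\equiv x^n+b \pmod{p^2}$.
\end{enumerate}
The key consequence of (iii) is that, in Dedekind's criterion, the polynomial $(f-\prod g_i^{e_i})/p \bmod p$ is the same as the one attached to the binomial $x^n+b$. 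The question therefore reduces exactly to $p$-maximality of $x^n+b$ at primes $p\nmid b$.

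If $p\nmid n$, then $x^n+b$ is separable mod $p$ and $p$ does not divide the index. If $p\mid n$, so that $p\mid\gcd(n,m)$, write $n=p^eu$ with $p\nmid u$. Then $x^n+b\equiv (x^u+c)^{p^e}\pmod p$ with $c^{p^e}\equiv b\pmod p$; one can take $c=b^{p^{t}}$ for a suitable $t$ so that $c\equiv b^{\,p^{-e}}$. The standard computation then shows that the remainder of $x^n+b-(x^u+c)^{p^e}$, divided by $p$ and reduced mod $p$, is a constant multiple of $(b^{p}-b)/p$ (up to units mod $p$). It is coprime to the repeated factor exactly when $b^{p-1}\not\equiv 1\pmod{p^2}$. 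So for odd $p$, $p$ divides the index if and only if $p$ is a base-$b$ Wieferich prime.

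For $p=2$ (so $m$ and $n$ are even and $b$ is odd), I would redo this computation with the sign made explicit. Mod $2$ we have $x^n+b\equiv (x^{u}+1)^{2^e}$, and the relevant quantity becomes $(1+b)/2$ rather than $(b^{p}-b)/p$, up to units. Hence $2$ divides the index if and only if $b\equiv 3\pmod 4$. This matches the stated condition that $b\equiv 1\pmod4$, i.e.\ that $2$ is a base-$b$ Wieferich prime; the sanity check is $x^2+b$, where $\Z[\sqrt{-b}]$ is $2$-maximal iff $-b\not\equiv1\pmod4$.

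The main obstacle is the discriminant step: verifying that after removing the primes dividing $bm$, $\abs{\Delta(f_{n,b,m})}$ is exactly $d$ (squarefree). This needs care with the factor $(1+bm^{2n}n)^2$ arising from the resultant with $m^2x-1$. I would handle it by computing $\mathrm{Res}(f_{n,b,m},m^2x-1)=m^{2n}f_{n,b,m}(1/m^2)$ directly and dividing it out of the trinomial discriminant. The careful bookkeeping of units and signs in the $p=2$ case is the secondary difficulty.
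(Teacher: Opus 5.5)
Your argument is correct, but it is organized differently from the paper's. The paper does not run Dedekind's criterion by hand for this theorem. It takes $\abs{\Delta(\FF_{n,b,m,1})}=b^{n-1}\abs{\D}$ from Lemma \ref{Lem:Discs} and applies the Jakhar--Kalwaniya--Yadav theorem (Theorem \ref{Thm:JKY}) with $a=m^2$. It uses condition (i) for $p\mid b$, condition (ii) for $p\mid\gcd(n,m)$, and condition (iii) for $p\nmid bm$; in case (ii), $a_1=bm^2/p\equiv 0\pmod p$ leaves only the test $b+(-b)^{p^j}\not\equiv 0\pmod{p^2}$. The cost of that black box is its side hypothesis. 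The paper therefore has to argue separately, by contradiction using that $d$ is squarefree and $n\ge 3$, that no prime $p\mid\Delta$ with $p\nmid bm$ divides $n+1$.

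Your route needs none of this. For $p\nmid bm$, the relation \eqref{Eq:Dis-Dis} together with $p^2\nmid\Delta$ suffices, and Eisenstein disposes of $p\mid b$. For $p\mid m$, the congruence $f_{n,b,m}(x)\equiv x^n+b\pmod{p^2}$ transfers the Dedekind test verbatim to the binomial. That is precisely the computation the paper performs for $g_p(x)$ in the proof of Theorem \ref{Thm:Main3}. Some details of that step:
\begin{itemize}
\item You can simply take $c=b$, by Fermat.
\item For odd $p$, the binomial-coefficient sum vanishes at a root of $x^u+b$, leaving $(b^{p^e}-b)/p\equiv (b^p-b)/p\pmod p$.
\item For $p=2$ the sum contributes $+1$, giving $(b^{2^e}-b)/2+1\equiv (1+b)/2\pmod 2$, as you claim.
\end{itemize}

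Your telescoping derivation of the discriminant also checks out. One has $\mathrm{Res}(m^2x-1,f_{n,b,m})=m^{2n}f_{n,b,m}(m^{-2})=1+bnm^{2n}$. The discriminant of $m^2x^{n+1}+(bm^{2n}-1)x^n-b$ is $\pm b^{n-1}$ times the numerator of $\D$. So you recover Lemma \ref{Lem:Discs} for $k=1$.

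In short, your proof is self-contained and consistent with the method of Theorem \ref{Thm:Main3}. The paper's proof is shorter, but it relies on two external results and must check the extra hypothesis of Theorem \ref{Thm:JKY}.
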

The following corollary is an immediate consequence of Theorem \ref{Thm:Main2}.
\begin{cor}\label{Cor:Main2}
 Assuming \eqref{Defs1} and \eqref{Defs2}, let $n\ge 3$. If $\gcd(n,m)=1$, then $\FF_{n,b,m,1}(x)$ is monogenic.  
\end{cor}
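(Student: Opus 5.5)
The corollary follows directly from Theorem \ref{Thm:Main2}, so the plan is to check that its hypotheses hold and that its condition is vacuous when $\gcd(n,m)=1$. We keep the standing assumptions \eqref{Defs1} and \eqref{Defs2} with $n\ge 3$ and $k=1$, exactly as Theorem \ref{Thm:Main2} requires.

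Theorem \ref{Thm:Main2} says that $\FF_{n,b,m,1}(x)$ is monogenic if and only if two conditions hold:
\begin{enumerate}[label=(\roman*)]
\item no odd prime divisor of $\gcd(n,m)$ is a base-$b$ Wieferich prime;
\item if $\gcd(n,m)\equiv 0 \pmod{2}$, then $b\equiv 1\pmod 4$.
\end{enumerate}
Now suppose $\gcd(n,m)=1$. Then $\gcd(n,m)$ has no prime divisors at all, so condition (i) is a statement about the empty set of primes and holds trivially. Since $1$ is odd, the hypothesis $\gcd(n,m)\equiv 0\pmod 2$ in condition (ii) is false, so condition (ii) holds vacuously.

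Both conditions are therefore satisfied, and Theorem \ref{Thm:Main2} gives that $\FF_{n,b,m,1}(x)$ is monogenic. There is no real obstacle. The only point needing care is that the standing hypotheses (in particular that $d$ is squarefree) are part of \eqref{Defs2} and so are assumed in the corollary, just as Theorem \ref{Thm:Main2} needs.
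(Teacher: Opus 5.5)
Your proposal is correct and follows the paper's argument exactly. The paper also presents the corollary as an immediate consequence of Theorem~\ref{Thm:Main2}: when $\gcd(n,m)=1$, both of that theorem's conditions hold vacuously, and the standing hypotheses \eqref{Defs1} and \eqref{Defs2} (in particular that $d$ is squarefree) carry over to the corollary, as you note.
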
 

The next theorem extends Theorem \ref{Thm:Main2} to a power-compositional situation. 
\begin{thm}\label{Thm:Main3}
 Assuming \eqref{Defs1} and \eqref{Defs2}, let $n\ge 3$ and $k\ge 2$. 
 Assume further that 
 \[g_p(x):=x^{nk/p^{\nu_p(k)}}+b,\] is irreducible in $\F_p[x]$ for every prime divisor $p$ of $k$.   
  Then $\FF_{n,b,m,k}(x)$ is monogenic if and only if no odd prime divisor of $k\gcd(n,m)$  
  is a base-$b$ Wieferich prime and if, additionally,   
 $k\gcd(n,m)\equiv 0 \pmod{2}$, then $b\equiv 1 \pmod{4}$, or equivalently, 2 is a base-$b$ Wieferich prime.
\end{thm}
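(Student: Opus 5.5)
The proof will run through the power-compositional criterion of Kaur, Kumar and Remete \cite{KKR}. Roughly, it says the following. Suppose $f(x)$ is monic and irreducible, and $f(x^k)$ is irreducible. Then $f(x^k)$ is monogenic if and only if two things hold. First, $f(x)$ is monogenic. Second, for each prime $p\mid k$, a certain ``Dedekind-type'' polynomial attached to the factorization of $\overline{f(x^k)}$ modulo $p$ is coprime to the repeated factors. In the usual formulation this reduces to checking that $p^2\nmid f(\,\cdot\,)$ at suitable residues, or equivalently that a polynomial built from $(f(x^{p})-f(x)^{p})/p$ shares no factor with $\overline{f(x)}$ in $\F_p[x]$.

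So the first step is to prove that $\FF_{n,b,m,k}(x)$ is irreducible. Because $m\equiv 0\pmod{\rad(k)}$, every $p\mid k$ divides $m$, so
\[f_{n,b,m}(x)\equiv x^n+b \pmod p.\]
The first $n-1$ lower coefficients carry factors $m^2$, and the constant term is $b$. Hence $\FF_{n,b,m,k}(x)\equiv x^{nk}+b\equiv g_p(x)^{p^{\nu_p(k)}}\pmod p$. Irreducibility of $g_p$ over $\F_p$ then gives a single repeated irreducible factor. Combining this over the primes $p\mid k$ with the Eisenstein-type behaviour at primes dividing $b$ (here $b$ is squarefree and $f\equiv x^n \pmod q$ for $q\mid b$) should yield irreducibility of $\FF_{n,b,m,k}$. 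Alternatively, one can take a prime $q\mid b$ and argue that $\FF\equiv x^{nk}\pmod q$ with constant term exactly divisible by $q$, so $\FF$ is Eisenstein at $q$. That is immediate, and I would use it.

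Next, Theorem \ref{Thm:Main2} tells us exactly when $f_{n,b,m}=\FF_{n,b,m,1}$ is monogenic: the odd primes of $\gcd(n,m)$ must not be base-$b$ Wieferich primes, with the stated $2$-adic condition. It remains to analyse the extra conditions from \cite{KKR} at each prime $p\mid k$. Take $\overline{\FF}=\overline{g_p}^{\,p^e}$ with $e=\nu_p(k)$. Dedekind's criterion at $p$ then asks whether $\overline{g_p}$ divides
\[\overline{\bigl(\FF_{n,b,m,k}(x)-g_p(x)^{p^e}\bigr)/p}.\]
Write $\FF_{n,b,m,k}(x)=x^{nk}+b+m^2(\cdots)$. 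Since $p^2\mid m^2$, the middle terms drop out modulo $p^2$. The question therefore reduces to $\bigl(x^{nk}+b-(x^{nk/p^e}+b)^{p^e}\bigr)/p$ modulo $(p,g_p)$. Modulo $g_p$ we have $x^{nk/p^e}\equiv -b$, so this quantity is congruent to $(b^{p^e}\cdot(\pm1)+b)/p$, up to sign bookkeeping when $p=2$. That is divisible by $p$ exactly when $b^{p^e-1}\equiv 1\pmod{p^2}$ (for $p$ odd, after noting $(-b)^{p^e}=-b^{p^e}$). This is equivalent to $b^{p-1}\equiv1\pmod{p^2}$, because $b^{p^e-1}=b^{p-1}\cdot b^{p(p^{e-1}-1)}$ and $b^{p}\equiv b$ modulo $p$ lifts correctly modulo $p^2$. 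So $p$ is an index divisor precisely when $p$ is base-$b$ Wieferich. For $p=2$ the analogous computation gives $(b^{2^e}-b)/2$ with sign $+$, and the condition becomes $b\equiv 1\pmod 4$ appearing with the reversed logical role stated in the theorem. I would check this carefully against the $n=2$ computation in Theorem \ref{Thm:Main1}, which already encodes the same $2$-adic phenomenon.

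Finally, I would assemble the pieces. The primes dividing $\Delta(\FF)$ are the primes dividing $k$, the primes dividing $b$ (harmless by Eisenstein), the primes in $\gcd(n,m)$ (handled by Theorem \ref{Thm:Main2}), and the primes dividing the squarefree part $d$ (harmless since $p^2\nmid$ the relevant discriminant contribution). Together these give the stated criterion over the odd primes of $k\gcd(n,m)$ and the parity condition. I expect the main obstacle to be matching the exact hypotheses of the \cite{KKR} theorem, and doing the $p=2$ bookkeeping so that the condition comes out as ``$b\equiv1\pmod4$'' rather than its negation. For that I would first redo the $n=2$ case as a sanity check.
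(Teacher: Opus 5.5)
Your proposal is correct and follows the paper's route. You use Eisenstein at a prime of $b$ for irreducibility, Theorem~\ref{Thm:KKR} together with Theorem~\ref{Thm:Main2} to handle $f_{n,b,m}$, and Dedekind's criterion at each $p\mid k$ with $\overline{\FF_{n,b,m,k}}=g_p^{p^{\nu_p(k)}}$. For odd $p$ this reduces to whether $p^2\mid b^{p^{\nu_p(k)}}-b$. Your substitution $x^{nk/p^{\nu_p(k)}}\mapsto -b$ into the whole quotient is a slicker version of the paper's binomial expansion.

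Two small fixes are needed:
\begin{enumerate}
\item Theorem~\ref{Thm:KKR} has a third condition, that $f(0)=b$ be squarefree. It holds here, but you should state it.
\item Your $p=2$ worry is moot. Since $\overline{g_2}=x^{N}+1$ with $N=nk/2^{\nu_2(k)}\ge 3$ vanishes at $x=1$ in $\F_2$, the irreducibility hypothesis forces $k$ to be odd. In any case, the correct quantity there would be $(b^{2^{\nu_2(k)}}+b)/2$, not $(b^{2^{\nu_2(k)}}-b)/2$; it is odd exactly when $b\equiv 1\pmod 4$.
\end{enumerate}
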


 \section{Preliminaries}\label{Section:Prelims}
 The first theorem is a standard tool used to determine if an irreducible polynomial is monogenic, and will be used in the proof of Theorem \ref{Thm:Main3}.
\begin{thm}[Dedekind's Index Criterion \cite{Cohen}]\label{Thm:Dedekind}
Let $K=\Q(\theta)$ be a number field, $T(x)\in \Z[x]$ the monic minimal polynomial of $\theta$, and $\Z_K$ the ring of integers of $K$. Let $p$ be a prime number and let $\overline{ * }$ denote reduction of $*$ modulo $p$ (in $\Z$, $\Z[x]$ or $\Z[\theta]$). Let
\[\overline{T}(x)=\prod_{i=1}^s\overline{\gamma_i}(x)^{e_i}\]
be the factorization of $T(x)$ modulo $p$ in $\F_p[x]$, and set
\[g(x)=\prod_{i=1}^s\gamma_i(x),\]
where the $\gamma_i(x)\in \Z[x]$ are arbitrary monic lifts of the $\overline{\gamma_i}(x)$. Let $h(x)\in \Z[x]$ be a monic lift of $\overline{T}(x)/\overline{g}(x)$ and set
\[F(x)=\dfrac{g(x)h(x)-T(x)}{p}\in \Z[x].\]
Then 
\[{\rm index}(T)\not \equiv 0 \pmod{p} \Longleftrightarrow \gcd\Bigl(\overline{F},\overline{g},\overline{h}\Bigr)=1 \mbox{ in } \F_p[x].\]
\end{thm}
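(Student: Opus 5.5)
The statement above is Dedekind's classical index criterion, quoted from \cite{Cohen}. If I were to prove it, I would localize at $p$ and use the multiplier-ring (Pohst--Zassenhaus) criterion for $p$-maximality. Put $A=\Z[\theta]$. Then $\mathrm{index}(T)\not\equiv 0 \pmod{p}$ exactly when $A$ is $p$-maximal, i.e.\ $A_{(p)}$ is integrally closed.

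\emph{Step 1: the $p$-radical of $A$.} We have $A/pA\cong \F_p[x]/(\overline{T})$. The nilradical of this ring is generated by the image of $\overline{g}$, since $\overline{g}$ is the squarefree part of $\overline{T}$. Hence the $p$-radical is $I_p=pA+g(\theta)A$.

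\emph{Step 2: the multiplier criterion.} I would prove that $A$ is $p$-maximal if and only if
\[
\{x\in K : xI_p\subseteq I_p\}=A.
\]
For one direction, if $A$ is not $p$-maximal, there is an element $y\in \Z_K\setminus A$ with $py\in A$. Since $I_p$ is nilpotent modulo $p$, some power satisfies $I_p^N\subseteq pA$. Taking the largest $j$ with $yI_p^j\not\subseteq A$ yields a nontrivial multiplier of $I_p$. For the other direction, a multiplier ring is an order containing $A$, so equality holds whenever $A$ is maximal at $p$.

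\emph{Step 3: reduction to divisibility in $\F_p[x]$.} Any candidate multiplier modulo $A$ may be taken of the form $x=a(\theta)/p$, with $a\in\Z[x]$ of degree less than $\deg T$. The condition $x\cdot p\in I_p$ is $a(\theta)\in I_p$, which means $\overline{g}\mid\overline{a}$. Write $\overline{a}=\overline{g}\,\overline{c}$. The condition $x\,g(\theta)\in I_p$ is $a(\theta)g(\theta)/p\in pA+g(\theta)A$. To handle it, I would use the identity $g(\theta)h(\theta)=-pF(\theta)$, which comes from $T(\theta)=0$. Writing $a=gc+pr$ and using this relation, the condition becomes a divisibility statement modulo $\overline{T}=\overline{g}\,\overline{h}$ that involves $\overline{c}$, $\overline{F}$, $\overline{g}$ and $\overline{h}$. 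I expect it to reduce to: $\overline{h}\mid \overline{c}\,\overline{F}$, and $\overline{h}\mid \overline{c}\,\overline{g}$ after absorbing the $pA$ part.

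\emph{Step 4: conclusion.} Put $\overline{D}=\gcd(\overline{F},\overline{g},\overline{h})$ and $\overline{c}=\overline{h}/\overline{D}$. This $\overline{c}$ satisfies both divisibilities, and $x\notin A$ exactly when $\overline{D}\neq 1$. So $\overline{D}\ne 1$ produces a multiplier outside $A$, and $p$ divides the index.

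Conversely, suppose $\overline{D}=1$. I would show that any admissible $\overline{c}$ is divisible by $\overline{h}$. Then $\overline{a}$ is divisible by $\overline{g}\,\overline{h}=\overline{T}$, which forces $x\in A$. The coprimality argument runs as follows: $\overline{h}\mid \overline{c}\,\overline{F}$ and $\overline{h}\mid\overline{c}\,\overline{g}$ give $\overline{h}\mid \overline{c}\gcd(\overline{F},\overline{g})$. Then one must peel off the common factors of $\overline{h}$ and $\gcd(\overline{F},\overline{g})$ irreducible by irreducible, keeping track of multiplicities.

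This converse bookkeeping is the main obstacle. The conditions really live modulo $\overline{T}$, not in $\F_p[x]$, so the divisibilities must be handled prime-by-prime on the factors $\overline{\gamma_i}^{e_i}$. It is also the place where the choice of lifts $\gamma_i$ and $h$ must be shown not to matter. I would first treat a single irreducible factor, i.e.\ $\overline{T}=\overline{\gamma}^{e}$, where everything becomes a valuation count. I would then combine the factors using the Chinese remainder theorem in $\F_p[x]/(\overline{T})$.
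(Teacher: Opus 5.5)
The paper gives no proof of this statement; it quotes it from Cohen, where it is proved by the route you take: the $p$-radical $I_p=pA+g(\theta)A$ together with the Pohst--Zassenhaus multiplier criterion. Your outline is sound, but you have misplaced the difficulty, and one step is skipped.

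\textbf{Step 3 is where the content is.} The reduction you only ``expect'' goes through in two stages. Note first that $g(\theta)h(\theta)=pF(\theta)$, not $-pF(\theta)$; the sign is harmless. With $a=gc+pr$, the condition is $g(\theta)^2c(\theta)/p\in I_p$.
\begin{enumerate}
\item Since $I_p\subseteq A$ and $A/pA\cong\F_p[x]/(\overline{T})$, the condition first forces $\overline{T}\mid\overline{g}^{\,2}\overline{c}$, i.e.\ $\overline{h}\mid\overline{g}\,\overline{c}$.
\item Now write $gc=hq+ps$ in $\Z[x]$, so that $g(\theta)^2c(\theta)/p=F(\theta)q(\theta)+g(\theta)s(\theta)$. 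Since $A/I_p\cong\F_p[x]/(\overline{g})$, this lies in $I_p$ iff $\overline{g}\mid\overline{F}\,\overline{q}$. Multiplying by $\overline{h}$ and cancelling $\overline{g}$, this is equivalent to $\overline{h}\mid\overline{F}\,\overline{c}$.
\end{enumerate}

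\textbf{The ``converse bookkeeping'' is not an obstacle.} The two conditions are ordinary divisibilities in the PID $\F_p[x]$, because the modulus $\overline{T}$ has already been cancelled against $\overline{g}$. Together they say $\overline{h}\mid\overline{c}\gcd(\overline{F},\overline{g})$. Since $\gcd\bigl(\overline{h},\gcd(\overline{F},\overline{g})\bigr)=\overline{D}$, Euclid's lemma shows the admissible $\overline{c}$ are exactly the multiples of $\overline{h}/\overline{D}$. In particular $\overline{h}\mid\overline{c}$ when $\overline{D}=1$, which is your converse in one line.

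This also gives Cohen's sharper form: $O(I_p)=A+\frac{U(\theta)}{p}A$ with $\overline{U}=\overline{T}/\overline{D}$, so $[O(I_p):A]=p^{\deg\overline{D}}$. No prime-by-prime CRT analysis is needed. Independence of the lifts is automatic, because the computation is valid for any choice of lifts.

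\textbf{The step you actually skip is in Step 2.} Choosing $j$ maximal gives $z\in yI_p^j\setminus A$ with $zI_p\subseteq A$. A multiplier, however, needs $zI_p\subseteq I_p$. This follows because $I_p$ is the radical of $pA$:
\begin{itemize}
\item For $w\in I_p$, pick $r$ with $w^r\in pA$. Then $u=zw\in A$, and $u^r\in p\Z_K$ because $z\in\Z_K$.
\item So $u$ lies in every prime of $\Z_K$ above $p$.
\item By lying-over, $u$ then lies in every prime of $A$ above $p$, i.e.\ $u\in I_p$.
\end{itemize}
With these two points filled in, your argument is a complete proof.
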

The following theorem, 
which is an algorithmic version of Theorem \ref{Thm:Dedekind} formulated specifically for trinomials,  
will be used in the proof of Theorem \ref{Thm:Main1}. 
 \begin{thm}{\rm \cite{JKS2}}\label{Thm:JKS2}
Let $N\ge 2$ be an integer.
Let $K=\Q(\theta)$ be an algebraic number field with $\theta\in \Z_K$, the ring of integers of $K$, having minimal polynomial $f(x)=x^{N}+Ax^M+B$ over $\Q$, where $N_1=N/\gcd(M,N)$ and $M_1=M/\gcd(M,N)$. A prime factor $p$ of $\Delta(f)$ does not divide the {\rm index}($f$) 
if and only if $p$ satisfies one of the following conditions:
\begin{enumerate}[label=(\roman*), font=\normalfont]
  \item \label{JKS:C1} when $p\mid A$ and $p\mid B$, then $p^2\nmid B$;
  \item \label{JKS:C2} when $p\mid A$ and $p\nmid B$, then
  \[\mbox{either } \quad p\mid a_2 \mbox{ and } p\nmid b_1 \quad \mbox{ or } \quad p\nmid a_2\left((-B)^{M_1}a_2^{N_1}+\left(-b_1\right)^{N_1}\right),\]
  where $a_2=A/p$ and $b_1=\frac{B+(-B)^{p^j}}{p}$, such that $p^j\mid\mid N$ with $j\ge 1$;
  \item \label{JKS:C3} when $p\nmid A$ and $p\mid B$, then
  \[\qquad \quad \mbox{either}\quad p\mid a_1 \mbox{ and } p\nmid b_2  \quad\mbox{or}\quad  p\nmid a_1b_2^{M-1}\left((-A)^{M_1}a_1^{N_1-M_1}-\left(-b_2\right)^{N_1-M_1}\right),\]
  where $a_1=\frac{A+(-A)^{p^l}}{p}$, such that $p^l\mid\mid (N-M)$ with $l\ge 0$, and $b_2=B/p$;
  \item \label{JKS:C4} when $p\nmid AB$ and $p\mid M$ with $N=s^{\prime}p^k$, $M=sp^k$, $p\nmid \gcd\left(s^{\prime},s\right)$, then 
   \begin{equation*}
     x^{s^{\prime}}+Ax^s+B \quad \mbox{and}\quad \dfrac{Ax^{sp^k}+B+\left(-Ax^s-B\right)^{p^k}}{p} 
   \end{equation*} are coprime modulo $p$;
            \item \label{JKS:C5} when $p\nmid ABM$, then 
            \[p^2\nmid \left(B^{N_1-M_1}N_1^{N_1}-(-1)^{M_1}A^{N_1}M_1^{M_1}(M_1-N_1)^{N_1-M_1}\right).\]
   \end{enumerate}
\end{thm}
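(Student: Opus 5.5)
This result is quoted from \cite{JKS2}. Were I to prove it, I would apply Theorem~\ref{Thm:Dedekind} to $T(x)=f(x)=x^N+Ax^M+B$ one prime at a time, splitting into the five cases of the statement according to whether $p$ divides $A$, $B$ and $M$. In each case the plan is the same. First find the factorization of $\overline{f}$ in $\F_p[x]$ explicitly enough to write down $g$ and $h$. Then compute $F=(gh-f)/p$. Finally decide when $\overline{F}$ has a common root with $\overline{g}$ and $\overline{h}$; it is enough to check roots of the repeated factors, since these are exactly the common roots of $\overline g$ and $\overline h$.

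\textbf{Case (i).} Here $\overline f=x^N$, so $g=x$, $h=x^{N-1}$ and $F=-(Ax^M+B)/p$. With $M\ge 1$ we get $\overline{F}(0)=\overline{-B/p}$, so the gcd is $1$ exactly when $p^2\nmid B$.

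\textbf{Case (ii).} Here $p\mid A$ and $p\nmid B$. Write $N=p^jN'$ with $p\nmid N'$. Then $\overline f=x^N+\overline{B}=(x^{N'}-(-B))^{p^j}$, using Fermat's little theorem in the form $(-B)^{p^j}\equiv -B \pmod p$. Since $p\nmid N'B$, the polynomial $x^{N'}+B$ is separable mod $p$. Take $g=x^{N'}+B$ (up to a harmless lift) and $h=g^{p^j-1}$. Expanding gives $F\equiv -a_2x^M-b_1$ modulo $\overline g$, with $a_2$ and $b_1$ as in the statement. The condition becomes: no root $\theta$ of $x^{N'}+B$ in $\overline{\F}_p$ satisfies $a_2\theta^M+b_1=0$. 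If $p\mid a_2$ this is just $p\nmid b_1$. Otherwise, combine $\theta^{M}=-b_1/a_2$ with $\theta^{N'}=-B$. Passing to $\theta^{\gcd}$ and using $N_1,M_1$ eliminates $\theta$, and the resultant yields the factor $(-B)^{M_1}a_2^{N_1}+(-b_1)^{N_1}$.

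\textbf{Case (iii).} This is the mirror image, with $\overline f=x^M(x^{N-M}+A)$. I would treat the factor $x$ as in (i), which produces the conditions involving $b_2$ and $b_2^{M-1}$. I would treat $x^{N-M}+A$ as in (ii), with $p^l\parallel N-M$, which produces $a_1$ and the elimination expression $(-A)^{M_1}a_1^{N_1-M_1}-(-b_2)^{N_1-M_1}$. Alternatively, one could reduce to (ii) by passing to the reciprocal polynomial, being careful about the root $0$.

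\textbf{Case (iv).} Since $p\mid M$ and $p\nmid AB$, we have $\overline f=(x^{s'}+Ax^s+B)^{p^k}$. I would take $g$ to be the product of the distinct irreducible factors of $x^{s'}+Ax^s+B$, and $F$ becomes the displayed quotient. Dedekind's criterion then gives precisely the coprimality statement.

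\textbf{Case (v).} This is the main obstacle. Here $p\nmid ABM$, and a repeated root $\theta$ of $\overline f$ satisfies $f'(\theta)=0$, i.e.\ $\theta^{N-M}=-AM/N$. Substituting into $f$ pins down $\theta^{\gcd(M,N)}$, so there is essentially one repeated factor, of multiplicity $2$. I would compare the Dedekind condition with the classical trinomial discriminant
\[
\Delta(f)=\pm B^{M-1}\bigl(N_1^{N_1}B^{N_1-M_1}-(-1)^{M_1}M_1^{M_1}(M_1-N_1)^{N_1-M_1}A^{N_1}\bigr)^{\gcd(M,N)}.
\]
The aim is to show that $p\mid \operatorname{index}(f)$ iff $p^2$ divides the bracketed quantity. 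I expect this to be done by a Taylor expansion of $f$ at a lift of the double root. The delicate point is verifying that $\overline F(\theta)=0$ is equivalent to the second-order vanishing of that quantity, and that the $\gcd(M,N)$-th power does not inflate the valuation count. This bookkeeping is where I anticipate most of the work.
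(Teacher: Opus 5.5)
The paper does not prove this theorem; it quotes it from \cite{JKS2}. Your case-by-case use of Theorem~\ref{Thm:Dedekind} is the standard route. The genuine problem is the one step in Case (ii) that you assert rather than compute.

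\textbf{The sign in Case (ii).} Your reduction $\overline{F}\equiv -a_2x^M-b_1 \pmod{\overline{g}}$ is correct. So when $p\nmid a_2$, $p\mid{\rm index}(f)$ iff some $\theta\in\overline{\F}_p$ satisfies $\theta^M=-b_1/a_2$ and $\theta^N=-B$. Use $\theta^N=(\theta^{N'})^{p^j}=-B$ rather than $\theta^{N'}=-B$ if you want $N_1,M_1$ to appear. Put $\eta=\theta^{\gcd(M,N)}$ and use $\gcd(N_1,M_1)=1$. Then a common root exists iff $(-B)^{M_1}=(-b_1/a_2)^{N_1}$, i.e. iff $p\mid (-B)^{M_1}a_2^{N_1}-(-b_1)^{N_1}$.

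The elimination gives a minus sign, and the statement as printed, with a plus sign, is false. Take $f(x)=x^3+3x+2$ and $p=3$. Then $a_2=1$, $b_1=(2-8)/3=-2$, and $(-B)^{M_1}a_2^{N_1}+(-b_1)^{N_1}=-2+8\equiv0\pmod 3$. So (ii) as printed predicts $3\mid{\rm index}(f)$. But $f(x+1)=x^3+3x^2+6x+6$ is $3$-Eisenstein, so $3\nmid{\rm index}(f)$. The polynomial $x^3+6x+2$ fails the other way: $3\mid{\rm index}$, yet (ii) as printed is satisfied.

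Your Case (iii), which you call the mirror image of (ii), yields $(-A)^{M_1}a_1^{N_1-M_1}-(-b_2)^{N_1-M_1}$ with a minus sign. The same elimination cannot give opposite signs in mirrored cases, and that should have alerted you. Your method therefore proves the corrected statement, with a minus in (ii), not the one written. This is harmless for the paper, since only the branch $p\mid a_2$ of (ii) is used in the proof of Theorem~\ref{Thm:Main1}.

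\textbf{Case (v) is still only a plan.} First record what $p\mid\Delta(f)$ and $p\nmid ABM$ force.
\begin{itemize}
\item $p\nmid N$: otherwise $\overline{f}'=AMx^{M-1}$ vanishes only at $0$, and $\overline f(0)=B\neq0$.
\item $p\nmid N-M$: otherwise a double root has $\theta^{N-M}=-A$, giving $f(\theta)=B\ne0$.
\item Hence $p$ is odd. The double roots are exactly the roots of $x^d-\bar\eta$ for a single $\bar\eta\in\F_p^*$, where $d=\gcd(M,N)$ and $p\nmid d$.
\item Each has multiplicity exactly $2$, because $f''(\theta)=AM(M-N)\theta^{M-2}\neq0$.
\end{itemize}

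A valuation count then finishes the case more cleanly than a Taylor expansion. Hensel-factor $f$ over $\Z_p$ into pieces $G_i$ with $\overline{G_i}=\overline{\gamma_i}^{\,2}$, where $\overline{\gamma_i}$ runs over the irreducible factors of $x^d-\bar\eta$, times a factor separable mod $p$.
\begin{itemize}
\item If $\Z_p[x]/(G_i)$ is maximal, it is a tamely ramified field with $e=2$ and residue degree $\deg\gamma_i$, so $v_p(\Delta(G_i))=\deg\gamma_i$.
\item Otherwise $v_p(\Delta(G_i))\ge 2\deg\gamma_i$: either the index or the resultant of the factors of $G_i$ contributes at least $\deg\gamma_i$ twice.
\end{itemize}
Thus $p\nmid{\rm index}(f)$ iff $v_p(\Delta(f))=d$. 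Since $v_p(\Delta(f))=d\cdot v_p(D)$, with $D$ the bracket in (v), this holds iff $p^2\nmid D$. The $\gcd(M,N)$-th power is exactly absorbed by the $d$ double roots.

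\textbf{Two smaller points.}
\begin{itemize}
\item Your displayed discriminant is missing the factor $\gcd(M,N)^N$; compare $\Delta(x^4+Ax^2+B)=16B(A^2-4B)^2$. This is harmless here because $p\nmid d$.
\item In (iv), $\phi=x^{s'}+Ax^s+B$ need not be squarefree mod $p$ (e.g. $x^6+x^3+1$ at $p=3$). So you must justify replacing $gh$ by $\phi^{p^k}$; they agree mod $p^2$ since $k\ge1$. You should also note that $(\phi^{p^k}-f)/p$ equals minus the displayed quotient only modulo $\phi$.
\end{itemize}
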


We use the following theorem for the proof of Theorem \ref{Thm:Main2}. 
\begin{thm}{\rm \cite{JKY1}}\label{Thm:JKY}
Let $K=\Q(\theta)$ be an algebraic number field with $\theta$ in the ring $\Z_K$ of algebraic integers of $K$ having minimal polynomial $f(x)=x^n+b\sum_{i=1}^n(ax)^{n-i}$ over $\Q$ with $n\ge 3$.     
Assume that for each prime $p$ dividing $\Delta(f)$ and not dividing $ab$, we have that $p$ does not divide $n+1$. Then a prime factor $p$ of $\Delta(f)$ does not divide the {\rm index}($f$) if and only if $p$ satisfies one of the following conditions:
 \begin{enumerate}[label=(\roman*), font=\normalfont]
 \item \label{JKY:I1} when $p\mid b$, then $p^2\nmid b$;
 \item \label{JKY:I2} when $p\nmid b$ and $p\mid a$ with $n=p^js$, $p\nmid s$, $j\ge 1$, then either 
 \[p\mid a_1 \ \mbox{and} \ p\nmid b_1 \quad \mbox{or} \quad p\nmid a_1((-b_1)^s+ba_1^s),\]
 where $a_1=\frac{ab}{p}$ and $b_1=\frac{b+(-b)^{p^j}}{p}$;
 \item \label{JKY:I3} when $p\nmid ab$, then $p^2\nmid \Delta(f)$.   
 \end{enumerate}
\end{thm}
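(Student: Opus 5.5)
Since this is a result quoted from \cite{JKY1}, I would reprove it directly from Dedekind's Index Criterion (Theorem \ref{Thm:Dedekind}), handling each prime $p\mid \Delta(f)$ in three cases according to $p\mid b$, $p\nmid b$ with $p\mid a$, and $p\nmid ab$. The first step is to record the closed form
\[(ax-1)f(x)=ax^{n+1}+(ba^n-1)x^n-b,\]
which comes from $\sum_{i=1}^n(ax)^{n-i}=((ax)^n-1)/(ax-1)$. I would also use throughout that whether $p\mid {\rm index}(f)$ depends only on $f$ modulo $p^2$, since $\overline{F}$ in Theorem \ref{Thm:Dedekind} depends only on $f \bmod p^2$.

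\emph{Case $p\mid b$.} Here $\overline{f}=x^n$, so I take $g=x$ and $h=x^{n-1}$. Then $F=-b\sum_{i=1}^n(ax)^{n-i}/p$, whose constant term is $-b/p$. Hence $\gcd(\overline F,\overline g,\overline h)=1$ exactly when $p\nmid b/p$, that is, $p^2\nmid b$; this is condition \ref{JKY:I1}.

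\emph{Case $p\nmid b$, $p\mid a$.} Every term $b(ax)^{k}$ with $k\ge 2$ is divisible by $p^2$. So modulo $p^2$ the polynomial $f$ agrees with the trinomial $x^n+abx+b$, and therefore the two have the same index behaviour at $p$. I would then apply Theorem \ref{Thm:JKS2}\ref{JKS:C2} with $A=ab$, $B=b$, $M=1$, which gives $N_1=n$ and $M_1=1$, $a_2=ab/p=a_1$, and $b_1=(b+(-b)^{p^j})/p$. Two things remain to check.
\begin{itemize}
\item Theorem \ref{Thm:JKS2} is stated for a minimal polynomial, but its proof is a pure Dedekind computation, so it applies to the trinomial here.
\item The resulting condition $p\nmid a_1\bigl((-b)a_1^{n}+(-b_1)^{n}\bigr)$ should match $p\nmid a_1((-b_1)^s+ba_1^s)$. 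To see this, write $n=p^js$. Modulo $p$, $(-b_1)^n\equiv((-b_1)^s)^{p^j}$ and $a_1^n\equiv (a_1^s)^{p^j}$, while $(-b)^{p^j}\equiv -b$ because $b_1\in\Z$ forces $b+(-b)^{p^j}\equiv0\pmod p$. Taking $p^j$-th roots (Frobenius) then shows that the two conditions, including signs, are equivalent.
\end{itemize}
Alternatively, I would redo the computation directly: $\overline f=(x^s+b)^{p^j}$, with $g$ a lift of $\mathrm{rad}(x^s+b)$, and then evaluate $\overline F$ at the roots of $x^s+b$.

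\emph{Case $p\nmid ab$.} This is the step I expect to be the main obstacle. The plan is as follows.
\begin{enumerate}
\item Compute $\Delta(f)$ from the closed form. The trinomial-type polynomial $ax^{n+1}+(ba^n-1)x^n-b$ has a known discriminant, and dividing out the factor $ax-1$ gives $\Delta(f)$ up to powers of $a$ and $b$ and resultant terms.
\item Show that modulo $p$ any repeated root $\overline\alpha$ of $\overline f$ is a common root of $G(x)=ax^{n+1}+(ba^n-1)x^n-b$ and $G'(x)=x^{n-1}\bigl((n+1)ax+n(ba^n-1)\bigr)$. Since $p\nmid b$, we have $\alpha\ne0$, so $\alpha$ is determined by the linear factor: $\alpha=-n(ba^n-1)/((n+1)a)$.
\item Use the hypothesis $p\nmid n+1$, which ensures $G''(\alpha)\ne0$. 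So there is at most one repeated root, and it has multiplicity exactly $2$. (One also checks that $\alpha\ne 1/a$, so this root belongs to $f$ and not only to the factor $ax-1$.)
\item Now $\overline f=(x-\overline\alpha)^2\overline u(x)$ with $\overline u$ squarefree and coprime to $x-\overline\alpha$. Dedekind's criterion reduces to checking whether $p^2\mid f(\alpha)$ for a suitable lift $\alpha$. By the standard argument for a single double root, this is equivalent to $p^2\mid\Delta(f)$, which gives condition \ref{JKY:I3}.
\end{enumerate}
The delicate point in this case is making the equivalence $p^2\mid f(\tilde\alpha)\iff p^2\mid\Delta(f)$ precise. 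I would do this by writing $\Delta(f)=\pm\mathrm{Res}(f,f')$ and factoring the resultant over the roots. The contribution from the double root is then, up to a $p$-adic unit, $f(\tilde\alpha)\cdot(\text{unit})$; this should be done after choosing the lift $\tilde\alpha$ as a root of $f'$ in the unramified extension via Hensel's lemma.
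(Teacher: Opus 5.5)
The paper gives no proof of this statement; it is quoted from \cite{JKY1}, so your argument has to stand on its own. Your cases $p\mid b$ and $p\nmid ab$ are sound, with one small correction in the latter. There $G''(\bar\alpha)=-n\bar\alpha^{n-2}(ba^n-1)$. Its nonvanishing does not come directly from $p\nmid n+1$: it comes from $\bar\alpha\ne 0$ together with $(n+1)a\bar\alpha=-n(ba^n-1)$ and $p\nmid (n+1)a$, which force $p\nmid n(ba^n-1)$. This also gives $p\nmid n$, which your resultant factorization needs, and it rules out $p=2$, since $ba^n-1$ is even when $ab$ is odd.

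The real problem is the case $p\nmid b$, $p\mid a$: your Frobenius check is wrong. Using $-b\equiv(-b)^{p^j}$ you get $(-b)a_1^n+(-b_1)^n\equiv\bigl((-b)a_1^s+(-b_1)^s\bigr)^{p^j}=\bigl((-b_1)^s-ba_1^s\bigr)^{p^j}\pmod p$. So Theorem \ref{Thm:JKS2}\ref{JKS:C2} as quoted produces $p\nmid a_1((-b_1)^s-ba_1^s)$, which for odd $p$ is not the condition in the statement. The discrepancy is real. Take $p=n=3$, $a=3$, $b=2$, i.e.\ $f(x)=x^3+18x^2+6x+2\equiv x^3+6x+2\pmod 9$. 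Then $a_1=2$ and $b_1=-2$, so the statement's quantity is $(-b_1)+ba_1=6\equiv 0$, while $(-b)a_1^3+(-b_1)^3=-8\not\equiv 0$. Dedekind directly ($g=x-1$, $h=(x-1)^2$, $F=-7x^2-x-1$, $F(1)=-9$) shows $3\mid{\rm index}(f)$. So the statement is right, and it is the plus sign in Theorem \ref{Thm:JKS2}\ref{JKS:C2} as quoted that fails; the resultant computation there gives $(-B)^{M_1}a_2^{N_1}-(-b_1)^{N_1}$. This case therefore cannot be settled by citing the trinomial theorem as stated, and your claim that the conditions agree ``including signs'' is false.

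Your fallback is the correct route and should be the proof. We have $f\equiv x^n+abx+b\pmod{p^2}$ and $\overline f=(x^s+b)^{p^j}$, with $x^s+b$ separable modulo $p$ because $p\nmid sb$. Take $g=x^s+b$ and $h=g^{p^j-1}$; then $F\equiv\sum_{i=1}^{p^j-1}\tfrac1p\binom{p^j}{i}x^{s(p^j-i)}b^i+\tfrac{b^{p^j}-b}{p}-a_1x\pmod p$. At a root $\alpha\in\overline{\F}_p$ of $x^s+b$, the identity $\sum_{i=0}^{p^j}\binom{p^j}{i}(-b)^{p^j-i}b^i=0$ gives $\overline F(\alpha)=-(b_1+a_1\alpha)$. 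Hence $p\nmid{\rm index}(f)$ if and only if $a_1\alpha\ne-b_1$ for every root $\alpha$ of $x^s+b$. If $p\mid a_1$ this says $p\nmid b_1$. If $p\nmid a_1$ it says $-b_1/a_1$ is not a root of $x^s+b$, i.e.\ $p\nmid(-b_1)^s+ba_1^s$. That is exactly condition \ref{JKY:I2}, with the plus sign.
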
  
The next theorem targets the monogenicity of power-compositional polynomials, and will be used in the proof of Theorem \ref{Thm:Main3}. 
 \begin{thm}{\rm \cite{KKR}}\label{Thm:KKR}
   Let $f(x)\in \Z[x]$ be monic, and let $k\ge 2$ be an integer such that ${\mathfrak F}(x):=f(x^k)$ is irreducible over $\Q$. Then ${\mathfrak F}(x)$ is monogenic if and only if all of the following conditions are true:
   \begin{enumerate}
     \item \label{I1:KKR} $f(x)$ is monogenic,
     \item \label{I2:KKR} $p$ does not divide the {\rm index}(${\mathfrak F}$) for every prime divisor $p$ of $k$, 
     \item \label{I3:KKR} $f(0)$ is squarefree.
   \end{enumerate}  
 \end{thm}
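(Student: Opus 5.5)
The statement is the cited result of \cite{KKR}, and I would prove it by comparing the orders $\Z[\alpha]\subseteq \Z_K$ and $\Z[\theta]\subseteq \Z_L$ one prime at a time. Here $\theta$ is a root of ${\mathfrak F}(x)$, $\alpha=\theta^k$ is a root of $f(x)$, $K=\Q(\alpha)$ and $L=\Q(\theta)$. Since ${\mathfrak F}(x)$ is irreducible of degree $k\deg f$, we have $[L:K]=k$, and $x^k-\alpha$ is the minimal polynomial of $\theta$ over $K$. In particular $\Z[\theta]=\bigoplus_{i=0}^{k-1}\Z[\alpha]\theta^i$ is a free $\Z[\alpha]$-module, and this representation is unique because $1,\theta,\ldots,\theta^{k-1}$ is a $K$-basis of $L$.

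\emph{Necessity.} First, suppose $\Z_L=\Z[\theta]$ and let $\gamma\in\Z_K$. Then $\gamma\in\Z_L$, so $\gamma=\sum_{i=0}^{k-1}c_i(\alpha)\theta^i$ with $c_i\in\Z[x]$. Uniqueness of the $K$-coordinates forces $\gamma=c_0(\alpha)\in\Z[\alpha]$, so $f$ is monogenic; this gives condition (1). Condition (2) is trivial. For condition (3), suppose $p^2\mid f(0)={\mathfrak F}(0)$. Then $x\mid\overline{f}$, so $x^k$ with $k\ge 2$ divides $\overline{\mathfrak F}$. Apply Theorem~\ref{Thm:Dedekind} to ${\mathfrak F}$ at $p$, with lift $\gamma_1=x$. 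Then $x$ divides both $\overline{g}$ and $\overline{h}$. Also $F(0)=(g(0)h(0)-{\mathfrak F}(0))/p=-f(0)/p\equiv 0\pmod p$, since $g(0)h(0)\equiv 0\pmod{p^2}$ when both have a factor $x$ (taking the lifts with zero constant term for the $x$-factor). Hence $x\mid\gcd(\overline{F},\overline{g},\overline{h})$ and $p\mid{\rm index}({\mathfrak F})$, a contradiction.

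\emph{Sufficiency.} Assume (1)--(3). By (2), it suffices to show that no prime $p\nmid k$ divides ${\rm index}({\mathfrak F})$. I would work locally, over $\Z_{(p)}$. By (1), $\Z_K\otimes\Z_{(p)}=\Z_{(p)}[\alpha]$. At a prime $\mathfrak p$ of $K$ above $p$ with $\alpha\notin\mathfrak p$, the relative discriminant of $x^k-\alpha$ is $\pm k^k\alpha^{k-1}$, a $\mathfrak p$-unit. Hence $L/K$ is unramified above $\mathfrak p$ and $\Z_{K,\mathfrak p}[\theta]$ is the full local ring of integers there. The primes $\mathfrak p$ containing $\alpha$ occur only when $p\mid f(0)$, in which case $p\,\|\, f(0)$ by (3). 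I would handle these via Dedekind directly. Write $\overline{f}=x^e\,\overline{u}(x)$ with $\overline{u}(0)\ne0$; then $\overline{\mathfrak F}=x^{ke}\,\overline{u}(x^k)$. For the $x$-part of the factorization we have $\overline{F}(0)=-f(0)/p\not\equiv0$, so $x\nmid\overline{F}$. For the remaining irreducible factors, which divide $\overline{u}(x^k)$, the unramified argument above applies.

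The main obstacle is gluing these two regimes into one statement that $p\nmid{\rm index}({\mathfrak F})$. I would try first to do everything in Dedekind's criterion: show that an irreducible factor $\overline{\phi}\ne x$ of $\overline{\mathfrak F}$ dividing $\gcd(\overline{F},\overline{g},\overline{h})$ would produce a repeated factor of $\overline{f}$ that also divides the corresponding Dedekind polynomial of $f$. That factor would be obtained by composing with $x\mapsto x^k$, which is separable away from $0$ since $p\nmid k$, and this contradicts (1).
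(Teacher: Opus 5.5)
The paper gives no proof of this statement. It is quoted from Kaur--Kumar--Remete and used as a black box in the proof of Theorem~\ref{Thm:Main3}, so your proposal has to stand on its own.

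\textbf{Necessity.} This half is complete and correct. The $K$-basis argument gives (1). For (3), the lift $\gamma_1=x$ gives $g(0)=0$. It also gives $x\mid\overline{h}$, since $x$ occurs in $\overline{\mathfrak F}$ with multiplicity $ke\ge 2$. Hence $F(0)=-f(0)/p\equiv 0\pmod p$, and Theorem~\ref{Thm:Dedekind} yields $p\mid{\rm index}(\mathfrak F)$.

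\textbf{Sufficiency: the gap.} This half is not proved. For $p\nmid k$ with $p\mid f(0)$ you correctly dispose of the factor $x$ of $\overline{\mathfrak F}$. For the other irreducible factors $\overline{\psi}$, you only say that ``the unramified argument above applies'', and then call the needed step the unresolved ``main obstacle''. That sentence does not follow from Theorem~\ref{Thm:Dedekind}, which is a single global test on $\gcd(\overline{F},\overline{g},\overline{h})$. Turning maximality of $\Z_{K,\mathfrak p}[\theta]$ into the statement $\overline{\psi}\nmid\gcd(\overline{F},\overline{g},\overline{h})$ needs a factor-by-factor form of Dedekind's criterion, which you have not set up.

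\textbf{How to close it.} Either of your two ideas finishes the job once it is carried out.

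\emph{Relative route.} Suppose $\alpha\in\mathfrak p$. Then $v_p(N_{K/\Q}(\alpha))=v_p(f(0))=1$ forces $v_{\mathfrak p}(\alpha)=1$. So $x^k-\alpha$ is $\mathfrak p$-Eisenstein, and $\Z_{K,\mathfrak p}[\theta]$ is maximal too. Now $\Z_{(p)}[\theta]=A[\theta]$ with $A=\Z_{(p)}[\alpha]$, which by (1) is the semilocalization of $\Z_K$. The finitely generated $A$-module $(\Z_L)_{(p)}/A[\theta]$ vanishes at every maximal ideal of $A$, because integral closure commutes with localization. Hence it is $0$, and there is no gluing issue.

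\emph{Dedekind route over $\Z$.} Write $\overline{f}=x^e\prod_i\overline{\phi_i}^{\,e_i}$ with $\overline{\phi_i}\neq x$, and set $\epsilon=\min(e,1)$, $g_f=x^{\epsilon}\prod_i\phi_i$ and $h_f=x^{e-\epsilon}\prod_i\phi_i^{e_i-1}$. Take $g=x^{\epsilon}\prod_i\phi_i(x^k)$ and $h=x^{ke-\epsilon}\prod_i\phi_i(x^k)^{e_i-1}$. These are admissible for $\mathfrak F$, because $p\nmid k$ makes the $\overline{\phi_i}(x^k)$ separable, pairwise coprime and prime to $x$. The gcd does not depend on which monic lifts of $\overline{g},\overline{h}$ are used, since $\overline{F}$ only changes by a multiple of $\overline{h}$ or $\overline{g}$. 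Then $gh=(g_fh_f)(x^k)$, so $F(x)=F_f(x^k)$.

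Now suppose $\overline{\psi}\neq x$ divides all three of $\overline{F},\overline{g},\overline{h}$.
\begin{enumerate}
\item It divides a unique $\overline{\phi_i}(x^k)$.
\item Then $\overline{\psi}\mid\overline{h}$ forces $e_i\ge 2$.
\item A root $\beta$ of $\overline{\psi}$ gives $\overline{F_f}(\beta^k)=0$, so $\overline{\phi_i}\mid\gcd(\overline{F_f},\overline{g_f},\overline{h_f})$. This contradicts (1).
\end{enumerate}
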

The following two lemmas will also be helpful in establishing our results.   
 \begin{lemma}\label{Lem:Irreduc}
 $\FF_{n,b,m,k}(x)$ is irreducible over $\Q$.
\end{lemma}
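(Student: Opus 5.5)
The plan is to prove irreducibility directly with Eisenstein's criterion at a prime dividing $b$. This needs none of the monogenicity machinery of Section \ref{Section:Prelims}.

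First, write the polynomial out explicitly. By \eqref{Defs2},
\[
f_{n,b,m}(x)=x^n+b\sum_{j=1}^{n}m^{2(n-j)}x^{n-j}=x^n+bm^{2(n-1)}x^{n-1}+\cdots+bm^2x+b .
\]
Substituting $x^k$ gives
\[
\FF_{n,b,m,k}(x)=x^{nk}+\sum_{j=1}^{n}bm^{2(n-j)}x^{k(n-j)}.
\]
This is monic of degree $nk$. Every non-leading coefficient is either $0$ or of the form $bm^{2(n-j)}$, so each is divisible by $b$. The constant term comes from $j=n$ and equals exactly $b$.

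Next, choose the Eisenstein prime. Since $b\ge 2$, there is a prime $q$ with $q\mid b$. Since $b$ is squarefree by \eqref{Defs1}, we have $q^2\nmid b$. Then $q$ divides every non-leading coefficient of $\FF_{n,b,m,k}(x)$, $q$ does not divide the leading coefficient $1$, and $q^2$ does not divide the constant term $b$. Eisenstein's criterion therefore shows that $\FF_{n,b,m,k}(x)$ is irreducible over $\Q$, for every $n\ge 2$ and $k\ge 1$.

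I expect no real obstacle here. The only points to check are that the $j=n$ term contributes exactly $b$ to the constant term, with no extra factor of $m$, and that no other term contributes to the constant term. The latter holds because $k(n-j)\ge k\ge 1$ for $j<n$. The hypotheses $\gcd(b,m)=1$ and the squarefreeness of $d$ are not needed for this lemma.
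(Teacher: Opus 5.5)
Your proof is correct and is the same as the paper's: $\FF_{n,b,m,k}(x)$ is Eisenstein at any prime divisor of the squarefree integer $b\ge 2$. Your explicit checks that the constant term is exactly $b$ and that every other non-leading coefficient is a multiple of $b$ spell out what the paper leaves implicit.
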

\begin{proof}
  Since $b\ge 2$ is squarefree by \eqref{Defs1}, it follows that $\FF_{n,b,m,k}(x)$ is Eisenstein with respect to any prime divisor of $b$, and is therefore irreducible over $\Q$.
\end{proof}

The next lemma follows from \cite[Corollary 2.8]{HJKodai}.
 \begin{lemma}\label{Lem:Discs} 
 $\abs{\Delta(\FF_{n,b,m,k})}=b^{nk-1}k^{nk}\abs{\D}^k$ with $\D\in \Z$. 
  \end{lemma}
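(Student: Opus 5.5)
The plan is to reduce the claim to a discriminant computation for $f_{n,b,m}(x)$ itself, and then pass to $\FF_{n,b,m,k}(x)=f_{n,b,m}(x^k)$ using the standard formula for discriminants of power-compositional polynomials, which is the content of \cite[Corollary 2.8]{HJKodai}. For a monic $f$ of degree $n$ and $k\ge 1$ that formula reads
\[\abs{\Delta(f(x^k))}=k^{nk}\abs{f(0)}^{k-1}\abs{\Delta(f)}^k.\]
Here $f_{n,b,m}(0)=b$, since only the $j=n$ term of the sum contributes a constant. So it suffices to prove that $\abs{\Delta(f_{n,b,m})}=b^{n-1}\abs{\D}$ and that $\D\in\Z$. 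Indeed, $b^{k-1}\cdot b^{(n-1)k}=b^{nk-1}$, which gives the stated formula.

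To compute $\Delta(f_{n,b,m})$, I would sum the geometric series: $f_{n,b,m}(x)=x^n+b\,\frac{(m^2x)^n-1}{m^2x-1}$. Hence
\[g(x):=(m^2x-1)f_{n,b,m}(x)=m^2x^{n+1}+(bm^{2n}-1)x^n-b,\]
which is a trinomial. Next I would use the multiplicativity of discriminants, $\Delta(uv)=\Delta(u)\Delta(v)\operatorname{Res}(u,v)^2$ (with the appropriate leading-coefficient conventions), together with $\Delta(m^2x-1)=1$. The resultant is
\[\operatorname{Res}(f_{n,b,m},m^2x-1)=\pm m^{2n}f_{n,b,m}(1/m^2)=\pm(1+bnm^{2n}).\]
This gives $\Delta(g)=\Delta(f_{n,b,m})(1+bnm^{2n})^2$.

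On the other hand, the classical formula for the discriminant of a trinomial $ax^{N}+Ax^{N-1}+B$ gives
\[\Delta(g)=\pm b^{n-1}\bigl((n+1)^{n+1}m^{2n}b\pm n^n(bm^{2n}-1)^{n+1}\bigr).\]
Up to sign, this is $b^{n-1}$ times the numerator defining $\D$. Dividing by $(1+bnm^{2n})^2$ then yields $\abs{\Delta(f_{n,b,m})}=b^{n-1}\abs{\D}$.

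Integrality of $\D$ follows because $\Delta(f_{n,b,m})\in\Z$, so $b^{n-1}\D\in\Z$, and $\gcd(b,1+bnm^{2n})=1$. The main obstacle is bookkeeping rather than ideas: getting the trinomial-discriminant formula with non-monic leading coefficient $m^2$ right, so that the powers of $m$ cancel exactly and the remaining factor matches the numerator of $\D$. I would check this step against the small case $n=2$ by direct computation.
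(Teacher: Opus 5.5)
Your proposal is correct. It follows the same route as the paper, but it proves what the paper only cites. The paper's proof is the single citation \cite[Corollary 2.8]{HJKodai}; it never computes $\Delta(f_{n,b,m})$, and $\D\in\Z$ is simply asserted in \eqref{Defs2}.

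Your first step, $\abs{\Delta(f(x^k))}=k^{nk}\abs{f(0)}^{k-1}\abs{\Delta(f)}^k$, is presumably what that citation supplies. In any case it follows in one line from $\operatorname{Res}(F,F')=\prod_{\beta}k\beta^{k-1}f'(\beta^k)$, taken over the roots $\beta$ of $F(x)=f(x^k)$.

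What your route adds is a self-contained computation of the case $k=1$. You clear the geometric series to get the trinomial $(m^2x-1)f_{n,b,m}(x)$, use $\operatorname{Res}(f_{n,b,m},m^2x-1)=\pm(1+bnm^{2n})$, and apply the trinomial discriminant formula. This explains where the denominator $(1+bm^{2n}n)^2$ in $\D$ comes from. It also gives an actual proof that $\D\in\Z$, via $\gcd(b,1+bnm^{2n})=1$ and $1+bnm^{2n}\neq 0$.

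The bookkeeping you flagged does close up. For $h(x)=\alpha x^{N}+\beta x^{N-1}+\gamma$ one has
\[\Delta(h)=(-1)^{N(N-1)/2}\gamma^{N-2}\bigl(N^N\alpha^{N-1}\gamma-(-1)^{N}(N-1)^{N-1}\beta^{N}\bigr).\]
Take $N=n+1$, $\alpha=m^2$, $\beta=bm^{2n}-1$ and $\gamma=-b$. The undetermined inner sign in your display is then $(-1)^{n+1}$. This converts $(bm^{2n}-1)^{n+1}$ into $(1-bm^{2n})^{n+1}$, so $\Delta(g)$ is exactly $\pm b^{n-1}$ times the numerator of $\D$, with no stray powers of $m$. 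As a check, for $n=2$ this gives $\D=4-bm^4$. That agrees with the value $\abs{\D}=bm^4-4$ the paper uses in the proof of Theorem~\ref{Thm:Main1}.
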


\section{The Proof of Theorem \ref{Thm:Main1}} 
\begin{proof}
Note that $\FF_{2,b,m,k}(x)$ is irreducible over $\Q$ by Lemma \ref{Lem:Irreduc}. 
By Lemma \ref{Lem:Discs} and \eqref{Defs1}, we have that 
  \begin{equation}\label{Delta and d}
  \abs{\Delta(\FF_{2,b,m,k})}=b^{2k-1}k^{2k}(bm^4-4)^k \quad \mbox{and} \quad d=\frac{bm^4-4}{\gcd(2,m)^2}.
   \end{equation} Let $p$ be a prime divisor of $\Delta(\FF_{2,b,m,k})$. We invoke Theorem \ref{Thm:JKS2} with 
  \begin{equation}\label{NMAB}
  N:=2k,\quad M:=k, \quad A:=bm^2 \ \mbox{ and } \ B:=b,
   \end{equation} to derive necessary and sufficient conditions for the monogenicity of $\FF_{2,b,m,k}(x)$. We say that a particular condition in Theorem \ref{Thm:JKS2} is {\em vacuous} if the divisibility criteria for $p$ in that condition are impossible under constraints imposed by \eqref{Defs1}, \eqref{Defs2} and \eqref{NMAB}.

   It is easy to see from \eqref{NMAB} that the divisibility criteria for $p$ render condition \ref{JKS:C3} of Theorem \ref{Thm:JKS2} vacuous, while condition \ref{JKS:C4} is vacuous since $\rad(k)$ divides $m$ from \eqref{Defs1}. 
      We also have by \eqref{NMAB} that the divisibility criterion for $p$ in condition \ref{JKS:C5} of Theorem \ref{Thm:JKS2} is that $p\nmid b^2m^2k$.  
   Since $p\mid \Delta(\FF_{2,b,m,k})$, we deduce from \eqref{Delta and d} that $p\mid (bm^4-4)$ and that $p\ge 3$. Thus, $p^2\nmid (bm^4-4)$ from \eqref{Delta and d}, since $d$ is squarefree by \eqref{Defs2}. Consequently, $p$ satisfies condition \ref{JKS:C5} of Theorem \ref{Thm:JKS2}. Similarly, if $p\mid bm^2$ and $p\mid b$, then $p^2\nmid b$ since $b$ is squarefree by \eqref{Defs1}, so that $p$ satisfies condition \ref{JKS:C1} of Theorem \ref{Thm:JKS2}. Therefore, we have shown that only condition \ref{JKS:C2} of Theorem \ref{Thm:JKS2} needs to be checked to determine the monogenicity of $\FF_{2,b,m,k}(x)$. That is, we only have to check prime divisors of $\gcd(\Delta(\FF_{2,b,m,k}),m)$. Since $\gcd(b,m)=1$ and $\rad(k)$ divides $m$ from \eqref{Defs1}, it follows from \eqref{Delta and d} that we need to examine precisely the prime divisors of $k$, in addition to the prime $p=2$, if $2\mid m$ and $2\nmid k$. 
   
   Suppose first that $p\mid k$. Then $p\mid m$ since $\rad(k)$ divides $m$ from \eqref{Defs1}, and we see from condition \ref{JKS:C2} of Theorem \ref{Thm:JKS2} that $a_2=bm^2/p \equiv 0\pmod{p}$. Thus, $p\nmid {\rm index}(\FF_{2,b,m,k})$ if and only if 
   \begin{equation}\label{b1}
   b_1=\frac{b+(-b)^{p^j}}{p}\not \equiv 0 \pmod{p},
    \end{equation} where $p^j\mid \mid 2k$ with $j\ge 1$. If $p=2$, then $2\nmid b$ since $\gcd(b,m)=1$ from \eqref{Defs1}. It follows that $2\nmid {\rm index}(\FF_{2,b,m,k})$ if and only if $b\equiv 1 \pmod{4}$ since 
   \[2b_1=b+(-b)^{2^j}\equiv \left\{\begin{array}{cl}
     2 \pmod{4} & \mbox{if $b\equiv 1 \pmod{4}$}\\
     0 \pmod{4} & \mbox{if $b\equiv 3 \pmod{4}$,}
   \end{array}\right.\] from \eqref{b1}. Suppose then that $p\ge 3$. Then $p\nmid b$ since $\gcd(b,m)=1$ from \eqref{Defs1}. We deduce from \eqref{b1} that $p\nmid {\rm index}(\FF_{2,b,m,k})$ if and only if 
   \[b^{p^j}-b\not \equiv 0 \pmod{p^2},\] which is equivalent to $b^p-b\not \equiv 0 \pmod{p^2}$ since $\phi(p^2)=p(p-1)$.    
   
   The case when $p=2$ such that $2\mid m$ and $2\nmid k$ is identical to the situation when $2\mid k$, and we omit the details. 
    \end{proof}

\section{The Proof of Theorem \ref{Thm:Main2}}
\begin{proof} 
Note that $\FF_{n,b,m,1}(x)=f_{n,b,m}(x)$ is irreducible over $\Q$ by Lemma \ref{Lem:Irreduc}. 
We utilize Theorem \ref{Thm:JKY} with $a:=m^2$ to derive necessary and sufficient conditions for the monogenicity of $\FF_{n,b,m,1}(x)$. 
From Lemma \ref{Lem:Discs}, 
\begin{equation}\label{k=1}
\abs{\Delta(\FF_{n,b,m,1})}=b^{n-1}\abs{\D}.  
\end{equation} Let $p$ be a prime divisor of $\Delta(\FF_{n,b,m,1})$. 

If $p\mid b$, then $p^2\nmid b$ since $b$ is squarefree by \eqref{Defs1}. Thus, $p\nmid {\rm index}(\FF_{n,b,m,1})$ by Theorem \ref{Thm:JKY}. So, assume that $p\nmid b$. Then $p\mid \D$ from \eqref{k=1}, and it is easy to see from the definition of $\D$ in \eqref{Defs2} that $p\mid m$ if and only if $p\mid n$.  
Suppose that $p\mid n$ and let $p^j\mid \mid n$ with $j\ge 1$. Observe then that  $a_1=bm^2/p\equiv 0 \pmod{p}$ in condition \ref{JKY:I2} of Theorem \ref{Thm:JKY}. We deduce that $p\nmid {\rm index}(\FF_{n,b,m,1})$ if and only if 
\begin{equation}
  b+(-b)^{p^j} \not \equiv 0 \pmod{p^2},
\end{equation} which is equivalent to $b^{p-1}-1 \not \equiv 0 \pmod{p^2}$ if $p\ge 3$, and $b\equiv 1 \pmod{4}$ if $p=2$. 
Suppose next that $p\nmid bm$. To apply Theorem \ref{Thm:JKY}, we must show that $p\nmid (n+1)$. Assume, by way of contradiction, that $p\mid (n+1)$, and let $n+1=py$, for some $y\in \Z$. Since $p\mid \D$, we see that $p\mid (1-bm^{2n})$ from \eqref{Defs2}, so that $p\mid (1+bm^{2n}n)$ since $n\equiv -1\pmod{p}$. Let $1-bm^{2n}=pz$ and $1+bm^{2n}n=pw$, for some $z,w\in \Z$. Then
\[\D=\frac{n^n(pz)^{n+1}+bm^{2n}(py)^{n+1}}{(pw)^2}=p^{n-1}\left(\frac{n^nz^{n+1}+bm^{2n}y^{n+1}}{w^2}\right),\] where $\frac{n^nz^{n+1}+bm^{2n}y^{n+1}}{w^2}\in \Z$. Since $p\nmid bm$ and $n\ge 3$, we deduce that $p^2\mid d$, contradicting the fact that $d$ is squarefree from \eqref{Defs2}. Thus, $p\nmid (n+1)$, and we conclude from Theorem \ref{Thm:JKY} that $p\nmid {\rm index}(\FF_{n,b,m,1})$ since $d$ is squarefree, which completes the proof of the theorem. 
\end{proof}

\section{The Proof of Theorem \ref{Thm:Main3}}
\begin{proof}
In light of Lemma \ref{Lem:Irreduc}, with 
\[f(x):=f_{n,b,m}(x) \quad \mbox{and} \quad {\mathfrak F}(x):=\FF_{n,b,m,k}(x)=f_{n,b,m}(x^k),\] we use Theorem \ref{Thm:KKR} to derive necessary and sufficient conditions for the monogenicity of $\FF_{n,b,m,k}(x)$ when $k\ge 2$. Item \eqref{I3:KKR} of Theorem \ref{Thm:KKR} is satisfied since $f_{n,b,m}(0)=b$ is squarefree by \eqref{Defs1}. Necessary and sufficient conditions for the monogenicity of $f_{n,b,m}(x)$ are given in Theorem \ref{Thm:Main2}, which addresses item \eqref{I1:KKR} of Theorem \ref{Thm:KKR}. Thus, to complete the necessary and sufficient conditions for the monogenicity of $\FF_{n,b,m,k}(x)$ when $k\ge 2$, we see, by item \eqref{I2:KKR} of Theorem \ref{Thm:KKR}, that we only need to determine conditions under which no prime divisor of $k$ divides index($\FF_{n,b,m,k}$). To accomplish this task, we use Theorem \ref{Thm:Dedekind} with $T(x):=\FF_{n,b,m,k}(x)$ and $p$ a prime divisor of $k$. Observe that $p\mid m$ and $p\nmid b$ from \eqref{Defs1}. 
Since $g_p(x)$ is irreducible in $\F_p[x]$, it follows that $\overline{T}(x)=g_p(x)^{p^{\nu_p(k)}}$, so that we can let $g(x):=g_p(x)$ and $h(x):=g_p(x)^{p^{\nu_p(k)}-1}$ in Theorem \ref{Thm:Dedekind}. Hence,
\begin{align*}
  F(x)&=\frac{g(x)h(x)-T(x)}{p}=\frac{g_p(x)^{p^{\nu_p(k)}}-\FF_{n,b,m,k}(x)}{p}\\
  &=\frac{\left(x^{nk/p^{\nu_p(k)}}+b\right)^{p^{\nu_p(k)}}-\left(x^{kn}+b\sum_{j=1}^{n}(m^2x^k)^{n-j}\right)}{p}\\
  &=\frac{b^{p^{\nu_p(k)}}-b}{p}+
  \sum_{j=1}^{p^{\nu_p(k)}-1}\frac{\binom{p^{\nu_p(k)}}{j}}{p}\left(x^{nk/p^{\nu_p(k)}}\right)^{p^{\nu_p(k)}-j}b^j
  -b\sum_{j=1}^{n-1}\left(\frac{m^2}{p}x^k\right)^{n-j}.
    \end{align*} Thus, since $p\mid m$, we have that 
    \begin{equation*}\label{Fbar}
    F(x)\equiv \frac{b^{p^{\nu_p(k)}}-b}{p}+\sum_{j=1}^{p^{\nu_p(k)}-1}\frac{\binom{p^{\nu_p(k)}}{j}}{p}
  \left(x^{nk/p^{\nu_p(k)}}\right)^{p^{\nu_p(k)}-j}b^j \pmod{p}.
  \end{equation*} Suppose that $g_p(\alpha)\equiv 0 \pmod{p}$. Then $\alpha^{nk/p^{\nu_p(k)}}\equiv -b\pmod{p}$, and therefore,
  \begin{align*}\label{alt}
  F(\alpha)&\equiv \frac{b^{p^{\nu_p(k)}}-b}{p}+\sum_{j=1}^{p^{\nu_p(k)}-1}\frac{\binom{p^{\nu_p(k)}}{j}}{p}
  \left(\alpha^{nk/p^{\nu_p(k)}}\right)^{p^{\nu_p(k)}-j}b^j \pmod{p}\\ 
  &\equiv \frac{b^{p^{\nu_p(k)}}-b}{p}+b^{p^{\nu_p(k)}}\sum_{j=1}^{p^{\nu_p(k)}-1}\frac{\binom{p^{\nu_p(k)}}{j}}{p}
  (-1)^{p^{\nu_p(k)}-j}\pmod{p}\\ 
  &\equiv \left\{\begin{array}{cl}
   \frac{b^{p^{\nu_p(k)}}-b}{p} \pmod{p} & \mbox{if $p\ge 3$}\\
    \frac{b^{2^{\nu_2(k)}}-b}{2}+1 \pmod{2} & \mbox{if $p=2$},
  \end{array}\right. \\ 
  &\equiv \left\{\begin{array}{cl}
   \frac{b^{p}-b}{p} \pmod{p} & \mbox{if $p\ge 3$}\\
     1 \pmod{2} & \mbox{if $p=2$ and $b\equiv 1 \Mod{4}$}\\
     0 \pmod{2} & \mbox{if $p=2$ and $b\equiv 3 \Mod{4}$}.
    \end{array}\right. 
   \end{align*} 
   Combining these computations with Theorem \ref{Thm:Main2} yields the theorem.   
    \end{proof}

\end{document}